\newcommand{\eqdef}{\stackrel{\textup{def}}{=}}
\DeclareMathOperator*{\Arg}{Arg}
\newcommand{\bigo}{\textup{O}}
\renewcommand{\phi}{\varphi}
\newcommand{\eps}{\varepsilon}
\newcommand{\Z}{\mathbb{Z}}
\newcommand{\N}{\mathbb{N}}
\newcommand{\C}{\mathbb{C}}
\newcommand{\E}{\mathbb{E}}
\renewcommand{\P}{\mathbb{P}}
\newcommand{\e}{\textup{e}}
\renewcommand{\i}{\textup{i}}
\renewcommand{\d}{\textup{d}}
\newcommand{\wt}[1]{\widetilde{#1}}
\newcommand{\wh}[1]{\widehat{#1}}
\newcommand{\poly}{\textup{poly}}
\newtheorem{thm}{Theorem}[section]
\newtheorem{lemma}[thm]{Lemma}
\theoremstyle{remark}
\newcommand{\wmod}[1]{\mbox{~(\textrm{mod}~$#1$)}}
\newcommand{\bS}{\bm{S}}
\title{Adaptive Sub-Linear Time Fourier Algorithms}
\author{David Lawlor, Yang Wang, and Andrew Christlieb\\Department of Mathematics, Michigan State University}
\begin{document}
%
%
%
%
%
%
\maketitle
\begin{abstract}
We present a new deterministic algorithm for the sparse Fourier transform problem, in which we seek to identify $k \ll N$ significant Fourier coefficients from a signal of bandwidth $N$. Previous deterministic algorithms exhibit quadratic runtime scaling, while our algorithm scales linearly with $k$ in the average case. Underlying our algorithm are a few simple observations relating the Fourier coefficients of time-shifted samples to unshifted samples of the input function. This allows us to detect when aliasing between two or more frequencies has occurred, as well as to determine the value of unaliased frequencies. We show that empirically our algorithm is orders of magnitude faster than competing algorithms.
\end{abstract}

%

\section{Introduction}
\label{sec:intro}
The Fast Fourier Transform (FFT) is arguably the most ubiquitous numerical algorithm in scientific computing.  In addition to being named one of the ``Top Ten Algorithms'' of the past century~\cite{dongarra2000guest}, the FFT is a critical tool in myriad applications, ranging from signal processing to computational PDE and machine learning.  At the time of its introduction, it represented a major leap forward in the size of problems that could be solved on available hardware, as it reduces the runtime complexity of computing the Discrete Fourier Transform (DFT) of a length-$N$ array from $\bigo(N^2)$ to $\bigo(N\log N)$.

Any algorithm which computes all $N$ Fourier coefficients has a runtime complexity of $\Omega(N)$, since it takes that much time merely to report the output.  However, in many applications it is known that the DFT of the signal of interest is highly sparse -- that is, only a small number of coefficients are non-zero.  In this case it is possible to break the $\Omega(N)$ barrier by asking only for the largest $k$ terms in the signal's DFT.  When $k\ll N$ existing algorithms can significantly outperform even highly optimized FFT implementations~\cite{iwen2007empirical,iwen2010combinatorial,hassanieh2012simple}.

\subsection{Related Work}
The first works to implicitly address the sparse approximate DFT problem appeared in the theoretical computer science literature in the early 1990s. In~\cite{linial1993constant}, a variant of the Fourier transform for Boolean functions was shown to have applications for learnability. A polynomial-time algorithm to find large coefficients in this basis was given in~\cite{kushilevitz1993learning}, while the interpolation of sparse polynomials over finite fields was considered in~\cite{mansour1995randomized}. It was later realized~\cite{gilbert2005improved} that this last algorithm could be considered as an approximate DFT for the special case when $N$ is a power of two.

In the past ten or so years, a number of algorithms have appeared which directly address the problem of computing sparse approximate Fourier transforms.  When comparing the results in the literature, care must be taken to identify the class of signals over which a specific algorithm is to perform, as well as to identify the error bounds of a given method.  Different algorithms have been devised in different research communities, and so have varying assumptions on the underlying signals as well as different levels of acceptable error.

The first result with sub-linear runtime and sampling requirements appeared in~\cite{gilbert2002near}. They give a $\poly(k,\log N, \log(1/\delta), 1/\eps)$ time algorithm for finding, with probability $1-\delta$, an approximation $\hat{y}$ of the DFT of the input $\hat{x}$ that is nearly optimal, in the sense that $\|\hat{x}-\hat{y}\|_2^2 \le (1+\eps)\|\hat{x}-\hat{x}_\textup{opt}\|_2^2$, where $\hat{x}_\textup{opt}$ is the best $k$-term approximation to $\hat{x}$. Here the exponent of $k$ in the runtime is two, so the algorithm is \emph{quadratic} in the sparsity.  Moreover, the algorithm is non-adaptive in the sense that the samples used are independent of the input $x$. This algorithm was modified in~\cite{gilbert2005improved} to bring the dependence on $k$ down to linear.\footnote{See \cite{gilbert2008tutorial} for a ``user-friendly'' description of the improved algorithm.} This was accomplished mainly by replacing uniform random variables (used to sample the input) by random arithmetic progressions, which allowed the use of nonequispaced fast Fourier transforms to sample from intermediate representations and to estimate the coefficients in near-linear time. The increased overhead of this procedure, however, limited the range of $k$ for which the algorithm outperformed a standard FFT implementation~\cite{iwen2007empirical}.

Around the same time, a similar algorithm was developed in the context of list decoding for proving hard-core predicates for one-way functions~\cite{akavia2003proving}. This can be considered an extension of~\cite{kushilevitz1993learning}, and like~\cite{gilbert2002near,gilbert2005improved} is a randomized algorithm.  Since the goal in this work was to give a polynomial-time algorithm for list decoding, no effort was made to optimize the dependence on $k$; it stands at $k^{11/2}$, considerably higher than~\cite{gilbert2002near,gilbert2005improved}. The randomness in this algorithm is used only to construct a sample set on which norms are estimated, and in~\cite{akavia2010deterministic} this set is replaced with a deterministic construction. This construction is based on the notion of $\eps$-approximating the uniform distribution over arithmetic progressions, and relies on existing constructions of $\eps$-biased sets of small size~\cite{katz1989estimate,ajtai1990construction}. Depending on the size of the $\eps$-biased sets used, the sampling and runtime complexities are $\bigo(k^4\log^c N)$ and $\bigo(k^6\log^c N)$, respectively, for some $c>4$.\footnote{Specifically, the runtime is $\bigo(k^2 \cdot \log N \cdot |S|)$, where $S$ is the set of samples read by the algorithm. This set takes the form $S = \bigcup_{\ell=1}^{\lfloor \log N \rfloor} A - B_\ell$, where $A$ has $\eps$-discrepancy on rank 2 Bohr sets, $B_\ell$ $\eps$-approximates the uniform distribution on $[0,2^\ell-1]\cap\Z$, and $A-B_\ell$ is the difference set. Using constructions from~\cite{katz1989estimate} one has $|A|=\bigo(\eps^{-1}\log^4 N), \; |B_\ell| = \bigo(\eps^{-3}\log^4 N)$; setting $\eps = \Theta(k^{-1})$ and noting that $\left| \bigcup A-B_\ell \right| = \bigo\left(\sum\left| A-B_\ell \right|\right)$ and $\left| A-B_\ell \right| = \bigo(|A||B_\ell|)$ (see, e.g., \cite{tao2006additive}) one obtains the stated sampling and runtime complexities.}

In the series of works \cite{iwen2008deterministic,iwen2010combinatorial,iwen2011improved}, a different deterministic algorithm for sparse Fourier approximation was given that relies on the combinatorial properties of \emph{aliasing}, or collisions among frequencies in sub-sampled DFTs.  By taking enough short DFTs of co-prime lengths, and employing the Chinese Remainder Theorem to reconstruct energetic frequencies from their residues modulo these sample lengths, the author is able to prove sampling and runtime bounds of $\bigo(k^2 \log^4 N)$.  The error bound is of the form $\|\hat{x}-\hat{y}\|_2 \le \|\hat{x}-\hat{x}_\textup{opt}\|_2 + k^{-1/2}\|\hat{x}-\hat{x}_\textup{opt}\|_1$; it has been shown that the stronger ``$\ell_2$-$\ell_2$'' guarantee of \cite{gilbert2005improved} cannot hold for a sub-linear, deterministic algorithm \cite{cohen2009compressed}. Moreover, the range of $k$ for which this algorithm is faster than the FFT is smaller in practice than that of \cite{gilbert2005improved}.

Most recently, the authors of \cite{hassanieh2012simple} presented a randomized algorithm that extends by an order of magnitude the range of sparsity for which it is faster than the FFT.  This is accomplished by removing the iterative aspect from \cite{gilbert2005improved} by using more efficient filters, which are nearly flat within the passband and which decay exponentially outside.  In contrast, the box-car filters used in \cite{gilbert2005improved} have a frequency response which oscillates and decays like $|\omega|^{-1}$. In addition, the identification of significant frequencies is done by direct estimation after hashing into a large number of bins rather than the binary search technique of \cite{gilbert2005improved}.  These changes give a runtime bound of $\bigo(\log N \sqrt{Nk\log N})$ and a somewhat stronger error bound $\|\hat{x}-\hat{y}\|_\infty^2 \le \eps k^{-1} \|\hat{x}-\hat{x}_\textup{opt}\|_2^2 + \delta \|\hat{x}\|_1^2$ with probability $1-1/N$, where $\eps>0$ and $\delta = N^{-\bigo(1)}$ is a precision parameter.

These existing algorithms generally take one of two approaches to the sparse Fourier transform problem.  In~\cite{gilbert2002near,akavia2003proving,gilbert2005improved,hassanieh2012simple}, the spectrum of the input is randomly permuted and then run through a low-pass filter to isolate and identify frequencies which carry a large fraction of the signal's energy.  This leads to randomized algorithms that fail on a non-negligible set of possible inputs.  On the other hand,~\cite{iwen2010combinatorial} takes advantage of the combinatorial properties of \emph{aliasing} in order to identify the significant frequencies.  This leads to a deterministic algorithm with higher runtime and sampling requirements than the randomized algorithms mentioned. Both of these randomized and deterministic approaches have drawbacks.  Randomized algorithms are not suitable for failure-intolerant applications, while the process used to reconstruct significant frequencies in~\cite{iwen2010combinatorial} relies on the Chinese Remainder Theorem (CRT), which is highly unstable to errors in the residues. While there do exist algorithms for ``noisy Chinese Remaindering'' \cite{goldreich2000chinese,boneh2002finding,shparlinski2004noisy} these have thus far not found application to the sparse DFT problem, and we leave this as future work.

As this paper was being prepared, the authors became aware of an independent work using very similar methods for frequency estimation in the noiseless case \cite{hassanieh2012nearly}. Both methods consider the phase difference between Fourier samples to extract frequency information, but are based on different techniques for binning significant frequencies. The authors of \cite{hassanieh2012nearly} use random dilations and efficient filters of \cite{hassanieh2012simple}, whereas we use different sample lengths in the spirit of \cite{iwen2010combinatorial}. We believe both contributions are of interest, and reinforce the notion that exploiting phase information is critical for developing fast, robust algorithms for the sparse Fourier transform problem.

\subsection{New Results}
In this paper we describe a simple, deterministic algorithm that avoids reconstruction with the CRT.  We are thus able to avoid two pitfalls associated with existing algorithms.  Our method relies on sampling the signal in the time domain at slightly shifted points, and thus it assumes access to an underlying continuous-time signal.  The shifted time samples allow us to determine the value of significant frequencies in sub-sampled FFTs and also indicate when two or more frequencies have been aliased in such a sub-sampled FFT.  These two key facts allow us to significantly reduce (by up to two orders of magnitude) the average-case sampling and runtime complexity of the sparse FFT over a certain class of random signals.  Our worst-case bounds improve by a constant factor those of prior deterministic algorithms. We present both adaptive and non-adaptive versions of our algorithms.  If the application allows samples to be acquired adaptively (that is, dependent on previous samples), we are able to improve further on our average-case bounds.

The remainder of this paper is organized as follows. In section \ref{sec:prelim} we introduce notation and prove the technical lemmas underlying our algorithms. In section \ref{sec:algs} we introduce randomized and deterministic versions of our algorithm. In section \ref{sec:avgcase} we prove that our algorithm has average-case runtime and sampling complexities of $\Theta(k\log(k))$ and $\Theta(k)$, respectively. In section \ref{sec:empirical} we present the results of an empirical evaluation of our algorithm and compare its runtime and sampling requirements to competing algorithms. Finally in section \ref{sec:conclusion} we provide some concluding remarks and discuss ongoing work to appear in the future.

\section{Mathematical Background}
\label{sec:prelim}
\subsection{Preliminaries}

Throughout this work we shall be concerned with frequency-sparse band-limited signals $S:[0,1)\to\C$ of the form
\begin{equation}
\label{eq:signal}
S(t) = \sum_{j=1}^k a_j \e^{2\pi\i\omega_j t},
\end{equation}
where $\omega_j \in [-N/2,N/2) \cap \Z, \; a_j \in \C,$ and $k \ll N$. The  Fourier series of $S$ is given by
\begin{equation} \label{eq:fourierseries}
   \widehat{S}(\omega) = \int_0^1 S(t) \e^{-2\pi\i\omega t} \d t, \; \omega \in \Z,
\end{equation}
so that for signals of the form~\eqref{eq:signal} we have $\widehat{S}(\omega_j) = a_j$
and $\wh S(\omega) = 0$ for all other $\omega \in [-N/2,N/2) \cap \Z$. Given any finite  sequence $\bS=(s_0, s_1, \dots, s_{p-1})$ of length $p$ we define its Discrete Fourier transform (DFT) by
\begin{equation} \label{eq:dft}
    \widehat{\bm{S}}[h] ~=~
        \sum_{j=0}^{p-1} s_j \e^{\frac{2\pi\i jh}{p}}
        ~=~ \sum_{j=0}^{p-1} \bS[j] W_p^{jh},
\end{equation}
where $h = 0, 1, \ldots, p-1$, $\bS[j]:=s_j$ and
$W_p:=\e^{-\frac{2\pi\i}{p}}$ is the primitive $p$-th root of unity. The Fast Fourier
Transform (FFT) allows the computation of $\wh\bS$ in $\bigo(p\log p)$ steps.

We apply the DFT to discrete samples of $S(t)$ to compute the Fourier coefficients $a_j$
of $S(t)$. For an integer $p$ and real $\eps > 0$ we form discrete arrays of samples
of $S$ of length $p$ via
$$
    \bm{S}_p[j] = S\Bigl(\frac{j}{p}\Bigr), ~~
    \bm{S}_{p,\eps}[j] = S\Bigl(\frac{j}{p}+ \eps\Bigr),~~ j = 0, 1, \ldots, p-1.
$$
Now assume that all $\omega_j \wmod{p},~ 1\leq j \leq k$ are distinct. It is a simple
derivation to obtain
$$
      \wh\bS_{p}[h] = \left\{\begin{array}{cl}
                                pa_j & ~~h \equiv\omega_j \wmod p \\
                                0 & \mbox{~~otherwise}.
      \end{array}\right.
$$
By examining the peaks of $\wh\bS_p[h]$ we will be able to determine
$\{\omega_j \wmod{p}: 1\leq j \leq k\}$. Previous approaches applied the
Chinese Remainder Theorem to reconstruct $\{\omega_j\}$ by taking a suitable number
of $p$'s, which must overcome the problem of registrations to match up each
$\omega_j$ whenever a new $p$ is used (see e.g.
\cite{iwen2010combinatorial,iwen2011improved}). Our algorithm takes
a different approach using the shifted sub-samples. Note that
$$
      \wh\bS_{p,\eps}[h] = \left\{\begin{array}{cl}
                                pa_je^{2\pi \i\eps \omega_j} & ~~h \equiv\omega_j \wmod p \\
                                0 & \mbox{~~otherwise}.
      \end{array}\right.
$$
It follows that in this setting, for $h\equiv\omega_j \wmod p$ we have
$\frac{\wh\bS_{p,\eps}[h]}{\wh\bS_{p}[h]} = \e^{2\pi \i \eps\omega_j}$.
Hence
\begin{equation} \label{eq:omega1}
   2\pi\eps\omega_j \equiv \Arg\left(\frac{\widehat{\bm{S}}_{p,\eps}[h]}{\widehat{\bm{S}}_{p}[h]}\right)
   \wmod{2\pi},
\end{equation}
where $\Arg(z)$ denotes the phase angle of the complex number $z$ in $[-\pi, \pi)$.
Assume that we take $|\eps| \leq \frac{1}{N}$. Then $\omega_j$ is completely determined by
(\ref{eq:omega1}) as there will be no wrap-around aliasing, and
\begin{equation} \label{eq:omega}
   \omega_j = \frac{1}{2\pi\eps} \Arg\left(\frac{\widehat{\bm{S}}_{p,\eps}[h]}{\widehat{\bm{S}}_{p}[h]}\right).
\end{equation}
In fact, more generally, if we have an estimate of $\omega_j$,
say $|\omega_j| < \frac{L}{2}$, then
by taking $|\eps| \leq \frac{1}{L}$ the same reconstruction formula (\ref{eq:omega})
holds. The observation that by taking slightly shifted samples will allow us to identify
frequencies in $S(t)$ underlies the algorithms which follow, and the bulk of this paper analyzes various aspects of the proposed algorithms, such as efficiency and robustness.

One of the problems is that when $p<N$ it is possible that two or more distinct frequencies will have the same remainder modulo $p$.  In this case we say the frequencies are \emph{aliased} or \emph{collide} $\wmod p$.  In general, for $h \in \{0,\ldots,p-1\}$ and
the given signal $S(t)$ let
$I(S,h;p) :=\{j:~\omega_j \equiv h \wmod p\}$. Then we have
\begin{equation} \label{eq:aliasing}
\wh{\bm{S}}_p[h] = \sum_{\omega \equiv h \scriptsize{\wmod p}} \wh{S}(\omega)
               = p\sum_{j\in I(S,h;p)} a_j.
\end{equation}
When aliasing occurs reconstruction via (\ref{eq:omega}) is no longer valid. The aliasing phenomenon presents a serious challenge for any method with sub-linear sampling complexity.  In the next section we develop a simple test to determine whether or not aliasing has occurred in a $p$-length DFT, which then allows us to effectively overcome this challenge and develop provably correct sub-linear algorithms.

\subsection{Technical Lemmas}

To effectively apply the sub-sample idea in a Fourier algorithm one must first
overcome the aliasing challenge. Using shifted sub-samples gives us a simple yet extremely
effective criterion to determine whether or not aliasing has occurred at a given location in a $p$-length DFT without resorting to complicated combinatorial techniques.
Observe that complementing (\ref{eq:aliasing}) we have
\begin{equation} \label{eq:shiftaliasing}
\wh{\bm{S}}_{p,\eps}[h]
               = p\sum_{j\in I(S,h;p)} a_j \e^{2\pi\i\eps\omega_j}.
\end{equation}
It follows that
\begin{align} \label{eq:diffaliasing}
   \left|\wh{\bm{S}}_{p,\eps}[h]\right|^2 -\left|\wh{\bm{S}}_{p}[h]\right|^2
       = p^2&\sum_{j,l\in I(S,h;p)} a_j\overline{a_l} \e^{2\pi\i\eps(\omega_j-\omega_l)} \\
       &-p^2\Bigl|\sum_{j\in I(S,h;p)} a_j\Bigr|^2. \notag
\end{align}

\begin{lemma} \label{lem:aliasing}
    Let $p>1$ and $h\in\{0, 1, \dots, p-1\}$. Assume that $q=|I(S,h;p)|>1$, i.e.
$\omega_j \equiv h \wmod{p}$ for more than one $j$ in $S(t)$. Then we have the following:
\begin{itemize}
\item[\rm (A)]~ Let $\eps>0$ and $E:=\{\omega_j-\omega_m: j,m\in I(S,h;p)\}$. 
Suppose that all elements of
$\eps E$ are distinct $\wmod 1$. Then
$\big|\wh{\bm{S}}_{p,m\eps}[h]\big|  \neq \big|\wh{\bm{S}}_{p}[h]\big|$
for some $1 \leq m \leq q^2-q$.
\item[\rm (B)]~ For almost all $\eps>0$ we have
$\big|\wh{\bm{S}}_{p,\eps}[h]\big|  \neq \big|\wh{\bm{S}}_{p}[h]\big|$.
\end{itemize}
\end{lemma}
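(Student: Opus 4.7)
The plan is to study the real-valued function
\[
D(\eps) \eqdef \bigl|\wh{\bS}_{p,\eps}[h]\bigr|^2 - \bigl|\wh{\bS}_p[h]\bigr|^2,
\]
since both parts of the lemma assert that $D$ fails to vanish: part (A) at some integer multiple of $\eps$, part (B) at almost every $\eps$. Starting from~(\ref{eq:diffaliasing}) and grouping terms by $\alpha = \omega_j - \omega_l$, I rewrite
\[
D(\eps) = p^2 \sum_{\alpha \in E} c_\alpha\bigl(\e^{2\pi\i\eps\alpha}-1\bigr), \qquad c_\alpha \eqdef \sum_{\substack{j,l \in I(S,h;p) \\ \omega_j - \omega_l = \alpha}} a_j \overline{a_l}.
\]
The key preliminary observation, used in both parts, is that at least one $c_\alpha$ with $\alpha \neq 0$ is nonzero. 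Indeed, letting $j_{\max}, j_{\min} \in I(S,h;p)$ index the largest and smallest $\omega_j$ (which exist and are unique because $q>1$ and the $\omega_j$ are distinct integers), the difference $\alpha^\star \eqdef \omega_{j_{\max}} - \omega_{j_{\min}}$ is attained by the unique pair $(j_{\max}, j_{\min})$, whence $c_{\alpha^\star} = a_{j_{\max}}\overline{a_{j_{\min}}} \neq 0$.

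For part (B), the exponentials $\{\e^{2\pi\i\eps\alpha}\}_{\alpha}$ are linearly independent over $\C$ as functions of $\eps$, so the observation above yields $D \not\equiv 0$. Since $D$ is real-analytic and nonzero, its zero set in $\R$ is discrete and therefore has Lebesgue measure zero, which gives part~(B).

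For part (A), set $E^\star \eqdef \{\alpha \in E \setminus \{0\}: c_\alpha \neq 0\}$ and $s \eqdef |E^\star|$; since $E \setminus \{0\}$ has at most $q(q-1)$ elements, $s \leq q^2 - q$. Writing $\beta_\alpha \eqdef \e^{2\pi\i\eps\alpha}$, the hypothesis that the elements of $\eps E$ are distinct $\wmod{1}$ guarantees that the $\beta_\alpha$ are pairwise distinct, and in particular $\beta_\alpha \neq 1$ on $E^\star$. Assume for contradiction that $D(m\eps) = 0$ for every $m \in \{1, \ldots, q^2 - q\}$. Combined with the trivial identity $D(0) = 0$, the consecutive differences $D\bigl((m+1)\eps\bigr) - D(m\eps)$ become, after dividing by $p^2$,
\[
\sum_{\alpha \in E^\star} d_\alpha\, \beta_\alpha^m = 0, \qquad m = 0, 1, \ldots, s-1,
\]
where $d_\alpha \eqdef c_\alpha(\beta_\alpha - 1)$. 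The Vandermonde matrix built from the distinct $\beta_\alpha$ is invertible, forcing every $d_\alpha = 0$; since $\beta_\alpha \neq 1$ on $E^\star$ this forces $c_\alpha = 0$ throughout $E^\star$, contradicting $c_{\alpha^\star} \neq 0$.

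The main obstacle I expect is the bookkeeping in part (A): the stated bound $q^2 - q$ is exactly the worst-case value of $|E^\star|$, so one must count $D(0) = 0$ when harvesting consecutive differences, otherwise the Vandermonde system falls one row short. The remaining ingredients --- linear independence of distinct complex exponentials, real-analyticity of finite exponential sums, and Vandermonde nondegeneracy --- are standard.
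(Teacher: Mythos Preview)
Your proof is correct and follows essentially the same Vandermonde-based approach as the paper: both express $D$ as a finite exponential sum indexed by $E$, exploit the automatic vanishing $D(0)=0$ to obtain the extra equation needed for the Vandermonde system in part~(A), and deduce part~(B) from the fact that a nonzero (real-)analytic function has a null zero set. The only cosmetic differences are that you take consecutive differences to kill the constant term whereas the paper absorbs it into the $\alpha=0$ coefficient of $E$, and that you are more explicit than the paper in verifying that some off-diagonal $c_\alpha$ is nonzero (via the extremal pair $(\omega_{j_{\max}},\omega_{j_{\min}})$), a point the paper asserts without justification.
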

\begin{proof}
   The proof of part (B) is immediate from (\ref{eq:diffaliasing}). Observe that
$f(\eps):=\left|\wh{\bm{S}}_{p,\eps}[h]\right|^2 -\left|\wh{\bm{S}}_{p}[h]\right|^2$
is trigonometric polynomial in $\eps$, and it is not identically $0$ given that
$q=|I(S,h;p)|>1$. Thus it has at most finitely many zeros for $\eps\in [0,1)$, and hence
(B) is clearly true.

   We resort to the Vandermonde matrix to prove part (A). For simplicity we write
$f(t) = \sum_{\alpha\in E} c_\alpha \e^{2\pi \i \alpha t}$. 
Set $r_\alpha := \e^{2\pi \i \alpha\eps}$
where $\eps$ satisfies the hypothesis of the lemma, which implies that all
$r_j$ are distinct. Assume the claim of part (A) is false. Then we have
$f(m\eps)=0$ for all $0 \leq m \leq q^2-q$. Here $f(0)=0$ is automatic because
$\bS_{p,0} = \bS_p$. Thus we have
\begin{equation}  \label{vandermonde}
     \sum_{\alpha\in E} c_\alpha r_\alpha^m = 0, ~~~m=0, 1, \dots, q^2-q.
\end{equation}
But the cardinality of $E$ is at most $q^2-q+1$, which means that there are
at most $q^2-q+1$ terms in the sum in (\ref{vandermonde}). Because all $r_\alpha$
are distinct the matrix
$[r_\alpha^m]$ is a nonsingular Vandermonde matrix, and for (\ref{vandermonde})
to hold all $c_\alpha$ must be zero. This is clearly not the case, and a contradiction.
\end{proof}

\medskip
\noindent
{\bf Remark.}~ Any irrational $\eps$ or $\eps= \frac{a}{b}$ with $a, b$ coprime and
$b\geq 2N$ will satisfy the hypothesis of part (A) of Lemma \ref{lem:aliasing}. It is
also easy to show that in the special
case where all coefficients $a_j$ are real and $|I(S,h;p)|=2$, we have
$\big|\wh{\bm{S}}_{p,\eps}[h]\big|  \neq \big|\wh{\bm{S}}_{p}[h]\big|$ for any
$\eps = \frac{a}{b}$ with $a, b$ coprime and $b\geq N$.

\medskip

Lemma~\ref{lem:aliasing} allow us to determine whether aliasing has occurred by
whether $\big|\wh{\bm{S}}_{p,\eps}[h]\big|/\big|\wh{\bm{S}}_{p}[h]\big|=1$ for
a few values of $\eps$. It offers both a deterministic (part (B)) and a random (part (A)) procedure to identify aliasing in the sub-sampled DFTs. In practice we need to set a tolerance $\tau$ in  order to accept or reject frequencies according to the criterion
\begin{equation}
\label{eq:aliasing_test}
\left| \frac{\left| \wh{\bS}_{p,\eps}[h]\right|}{\left| \wh{\bS}_p[h]\right|} -1\right| \le \tau.
\end{equation}
We typically choose $\eps = 1/cN$ for some small constant $c\geq 2$, which would satisfy the hypothesis of part (A) of Lemma~\ref{lem:aliasing}. A tolerance on the order of $p/N$ works well in general, which is what we use in our experiments in section~\ref{sec:empirical} below.

In our algorithms we will take a number of sub-sampled DFTs of an input signal $S(t)$ of the form~\eqref{eq:signal}, whose lengths we denote $p_\ell$.  Lemma~\ref{lem:aliasing} allows us to determine whether or not two or more frequencies are aliased, so that we only add the non-aliased term to our representation. Since it is unlikely that two or more frequencies are aliased modulo two different sampling rates, using a different $p_\ell$ in a subsequent iteration lets us quickly discover all frequencies present in $S(t)$. Lemma~\ref{lem:numps} gives a worst-case bound on the number of $p_\ell$'s required by our deterministic algorithm to identify all $k$ frequencies in a given Fourier-sparse signal.  It is similar to~\cite[Lemma 1]{iwen2010combinatorial}, but with a smaller constant.  In its proof we use the CRT, which we quote here for completeness (see, e.g.,~\cite{niven1991introduction}).

\begin{thm}[Chinese Remainder Theorem]
    Any integer $n$ is uniquely specified modulo $N$ by its remainders modulo $m$ pairwise relatively prime numbers $p_\ell$, provided $\prod_{\ell=1}^m p_\ell \ge N$.
\end{thm}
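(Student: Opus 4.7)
The plan is to reduce the statement to the injectivity of the residue map $\rho\colon \Z/M\Z \to \prod_{\ell=1}^m \Z/p_\ell\Z$ defined by $\rho(n) = (n \bmod p_1, \dots, n \bmod p_m)$, where $M = \prod_{\ell=1}^m p_\ell$. Once injectivity is established, the hypothesis $M \ge N$ immediately implies that restricting $\rho$ to any set of $N$ consecutive integers (for instance $\{0, 1, \dots, N-1\}$) is still injective, which is exactly the conclusion sought: the residues $(n \bmod p_\ell)_{\ell=1}^m$ determine $n$ modulo $N$.

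To prove injectivity of $\rho$, I would suppose $\rho(n) = \rho(n')$, so that $p_\ell \mid (n - n')$ for every $\ell$, and then argue that the coprimality hypothesis forces $M \mid (n - n')$. The clean way is to prove by induction on $m$ the auxiliary fact that if pairwise coprime integers $p_1, \dots, p_m$ each divide an integer $x$, then their product divides $x$. The base case $m=1$ is trivial. For the inductive step, assume $p_1 \cdots p_{m-1} \mid x$; writing $x = (p_1 \cdots p_{m-1}) y$, the hypothesis $p_m \mid x$ combined with $\gcd(p_m, p_i) = 1$ for $i < m$ gives $\gcd(p_m, p_1 \cdots p_{m-1}) = 1$, so by Euclid's lemma $p_m \mid y$, and hence $p_1 \cdots p_m \mid x$.

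Given this lemma, the main argument concludes in one line: $p_\ell \mid (n - n')$ for each $\ell$ together with pairwise coprimality yields $M \mid (n - n')$, i.e.\ $n \equiv n' \pmod{M}$. Since $M \ge N$, two distinct residues in $\{0, \dots, N-1\}$ differ by less than $N \le M$ in absolute value and so cannot both be multiples of $M$, completing the proof. The only genuinely nontrivial step is the coprimality-to-product lemma, which is the real content and the place where the pairwise (rather than merely joint) coprimality hypothesis is essential; everything else is just bookkeeping around the definitions of divisibility and modular equivalence.
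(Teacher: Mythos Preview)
Your argument is correct and is the standard proof of the uniqueness half of the Chinese Remainder Theorem. However, the paper does not actually prove this statement: it is quoted as a classical result with a reference to \cite{niven1991introduction}, and is used only as a tool in the proof of Lemma~\ref{lem:numps}. So there is nothing to compare against; your write-up simply supplies what the paper intentionally omits. One minor remark: the theorem as stated only asserts uniqueness (``uniquely specified''), not existence, and your proof correctly addresses precisely that claim via injectivity of the residue map.
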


\begin{lemma}
\label{lem:numps}
    Let $M>1$. It suffices to take $1+(k-1)\lfloor\log_{M}N\rfloor$ pairwise
    relatively prime $p_\ell$'s with $p_\ell
    \ge M$ to ensure that each frequency $\omega_j$ is isolated (i.e. not aliased) 
    $\wmod{p_\ell}$ for at least one $\ell$.
\end{lemma}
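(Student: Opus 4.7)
The plan is to argue by contradiction using a pigeonhole argument on the indices paired with a divisibility estimate. Suppose that the $p_\ell$'s fail: there is some frequency $\omega_j$ that collides with another frequency modulo every $p_\ell$. For each $\ell$, pick an index $i_\ell \neq j$ with $\omega_j \equiv \omega_{i_\ell} \pmod{p_\ell}$. Since the collision partner $i_\ell$ takes at most $k-1$ distinct values while we have $L := 1+(k-1)\lfloor \log_M N \rfloor$ moduli, pigeonhole produces a single index $i \neq j$ that serves as a collision partner for at least $\lceil L/(k-1)\rceil = \lfloor \log_M N\rfloor + 1$ of the $p_\ell$'s.

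Next, I would convert ``collision partner'' into a divisibility statement: $\omega_j \equiv \omega_i \pmod{p_\ell}$ means $p_\ell \mid (\omega_j - \omega_i)$. So we have $\lfloor \log_M N\rfloor + 1$ pairwise coprime integers, each at least $M$, all dividing the nonzero integer $\omega_j - \omega_i$. Here I would invoke the Chinese Remainder Theorem (or equivalently the fact that pairwise coprime divisors have a product that also divides the number) to conclude that $\prod p_\ell$ itself divides $\omega_j - \omega_i$. In particular,
\begin{equation*}
M^{\lfloor\log_M N\rfloor + 1} \le \prod_\ell p_\ell \le |\omega_j - \omega_i|.
\end{equation*}

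To finish, I would bound each side to obtain a contradiction. On the one hand, since $\omega_j, \omega_i \in [-N/2, N/2)\cap\Z$ are distinct integers, $0 < |\omega_j - \omega_i| < N$. On the other hand, $\lfloor \log_M N\rfloor + 1 > \log_M N$, so $M^{\lfloor \log_M N\rfloor + 1} > N$. Combining these contradicts the displayed inequality, establishing that no such $\omega_j$ exists and hence every frequency is isolated modulo at least one $p_\ell$.

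I don't anticipate any real obstacle; the proof is essentially a careful accounting of two combinatorial facts (pigeonhole among collision partners, and the CRT-based product-of-coprime-divisors bound) and a comparison of exponents. The only point requiring some attention is making sure the pigeonhole count delivers exactly $\lfloor \log_M N\rfloor + 1$ common collisions, which is why the constant in $L = 1+(k-1)\lfloor\log_M N\rfloor$ takes the precise form stated; off-by-one care is needed but nothing deeper.
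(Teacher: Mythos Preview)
Your proposal is correct and follows essentially the same route as the paper: contradiction via pigeonhole on the $k-1$ possible collision partners to find a single $\omega_i$ colliding with $\omega_j$ modulo at least $\lfloor\log_M N\rfloor+1$ of the coprime $p_\ell$'s, then a size bound on $\prod p_\ell$ versus $|\omega_j-\omega_i|<N$. The only cosmetic difference is that the paper phrases the final step as ``by the CRT, $\omega_j\equiv\omega_m\pmod N$'' rather than your direct divisibility inequality, but these are equivalent.
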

\begin{proof}
    Assume otherwise, namely that given $p_\ell$ for $\ell =1, 2, \dots, L$ with 
    $L > k\lfloor\log_{M}N\rfloor$ there exists some $\omega_j$ such that
    $\omega_j$ is aliased $\wmod{p_\ell}$. By the Pigeon Hole Principle there
    exists at least one $\omega_m \neq \omega_j$ such that
    $\omega_j-\omega_m \equiv 0 \wmod{p_\ell}$ at least $q$ times, where
    $q> \lfloor\log_{M}N\rfloor$. Without loss of generality we assume that
    $\omega_j-\omega_m \equiv 0 \wmod{p_\ell}$ for $\ell =1, 2, \dots, q$.
    Now by the fact that $p_\ell \geq M$ we have
    $$
    \prod_{\ell=1}^q p_\ell \ge M^q \ge N.
    $$
    By the CRT we would then have $\omega_j \equiv \omega_m \wmod N$, a contradiction.
\end{proof}

We remark that the algorithm in~\cite{iwen2010combinatorial} requires taking $1+2k\log_kN$ co-prime sample lengths, since that algorithm requires each $\omega$ to be isolated in at least half of the DFTs of length $p_\ell$.  This requirement stems from the fact that that algorithm cannot distinguish between aliased and non-aliased frequencies in a given sub-sampled DFT. Our worst-case bound is approximately a factor of two better, though in practice our algorithms never use all those sample lengths on random input.  The fact that we can tell which frequencies are ``good'' for a given $p_\ell$ allows us to construct our Fourier representation one term at a time, and quit when we have achieved a prescribed stopping criterion.

\section{Algorithms}
\label{sec:algs}

Both of our algorithms proceed along a similar course; in fact they differ only in the choice of the sample lengths $p_\ell$.  We assume that we are given access to the continuous-time signal $S(t)$ whose Fourier coefficients we would like to determine, and further that we can sample from $S$ at arbitrary points $t$ in unit time.  This is an appropriate model for analog signals, but not for discrete ones.  In the discrete case, one could interpolate between given samples to approximate the required $S$-values, though we have not implemented or analyzed this case.  (The same assumptions hold for the algorithms in~\cite{iwen2010combinatorial}, while those in~\cite{gilbert2002near, gilbert2005improved, hassanieh2012simple} are formulated purely in the discrete realm.) In this paper mainly limit ourselves to the noiseless case.  Though this is a highly unrealistic assumption, it permits a simple description of the underlying algorithm. In section \ref{sec:alg-noise} we discuss some of the problems associated with noisy signals and give a minor modification of our algorithm for low-level noise. A second manuscript in preparation addresses the issue of noise specifically, with more significant modifications to the algorithms described below.

\subsection{Non-adaptive}
Our algorithms start by choosing a sample length $p_1$ such that $p_1 \ge ck$ for some constant $c>1$.  For a fixed $\eps \le 1/N$, we then compute $\bm{\wh{S}}_p$ and $\bm{\wh{S}}_{p,\eps}$, sort the results by magnitude, and compute frequencies $\omega$ via~\eqref{eq:omega} for the $k$ largest coefficients in absolute value. We then check whether or not each of those frequencies is aliased via~\eqref{eq:aliasing}, and if it is not, we add it to our list.  The coefficient is given by the unshifted sample value $\wh{\bS}_p[h]$ at that frequency.  After this, we combine terms with the same frequency and prune small coefficients from the list. We then iterate until a stopping criterion is reached.  In the empirical study described in section~\ref{sec:empirical}, we stopped when the number of distinct frequencies in our list equalled the desired sparsity.

Our deterministic algorithm chooses $p_\ell$ to be the $\ell^\textup{th}$ prime greater than $ck$.  This ensures that all samples lengths are co-prime, at the expense of taking slightly more samples than necessary.  By Lemma~\ref{lem:numps}, $1+(k-1)\lfloor\log_{ck}N\rfloor$ such $p_\ell$s suffice to isolate every $\omega$ at least once.  This gives us worst-case sampling and runtime complexity on the same order as~\cite{iwen2010combinatorial}, though the results in section~\ref{sec:empirical} indicate that on average we significantly outperform those pessimistic bounds.

Our Las Vegas algorithm chooses $p_\ell$ uniformly at random from the interval $[c_1k, c_2k]$ for constants $1<c_1<c_2$.  In this case we cannot make a worst-case guarantee on the number of iterations needed by the algorithm to converge.  However, the results in section~\ref{sec:empirical} indicate that the Las Vegas version performs similarly to the deterministic version on the class of signals tested.

\subsection{Adaptive}
The algorithms can also be implemented in an adaptive fashion, by which we mean that the size of the current representation is taken into account in subsequent iterations.  In particular, if $\bm{R}$ is our current representation, we let $k^* = k-|\bm{R}|$ and choose the next $p_\ell$ with respect to $k^*$ instead of $k$.  Moreover, before taking DFTs, we subtract off the contribution from the current representation, so that effort is not expended re-identifying portions of the spectrum already discovered.  This idea is similar to that in~\cite{gilbert2002near, gilbert2005improved}, though in our empirical studies the evaluation of the representation is done directly, rather than as an unequally-spaced FFT.  This gives our algorithms asymptotically slower runtime, but the effect is negligible for the values of $k$ studied in section~\ref{sec:empirical}.  A formal description appears below in algorithm~\ref{alg:alg}.

\begin{algorithm}
   \caption{\textsc{Phaseshift}}
   \label{alg:alg}
   \begin{algorithmic}[5]
       \STATE \textbf{Input:} function pointer $S$, integers $c_1,c_2,k,N$, real $\eps$
       \STATE \textbf{Output:} $\bm{R}$, a sparse representation for $\wh{S}$
       \STATE $\bm{R} \gets \emptyset$, $\eps_0 \gets 0, \eps_1 \gets \eps,$ $\ell \gets 1$
       \WHILE{$\left|\bm{R}\right| < k$}
           \STATE $k^* \gets k-|\bm{R}|$ \hfill \COMMENT{or $k$ if non-adaptive}
           \STATE $p_\ell \gets$ first prime $\ge c_1k^*$ \\ \hfill \COMMENT{or \textsc{Uniform}$(c_1k^*, c_2k^*)$ if Las Vegas}
           \FOR{$m=0$ to 1}
           		\FOR{$j=0$ to $\ell-1$}
           		\STATE $\bm{S}_{\ell,m}[j] \gets S\left(\dfrac{j}{p_\ell}+\eps_m\right)$
           		\STATE $\displaystyle{\bm{S}_\textrm{rep}[j] \gets \sum_{(\omega,c_\omega) \in \bm{R}} c_\omega \e^{2\pi\i\omega(j/p_\ell+\eps_m)}}$ \\ \hfill \COMMENT{omit if non-adaptive}
				\ENDFOR
           \STATE $\wh{\bm{S}}_{\ell,m} \gets \textsc{FFT}(\bm{S}_{\ell,m}-\bm{S}_\textrm{rep})$

           \STATE $\wh{\bm{S}}_{\ell,m}^\textrm{sort} \gets \textsc{Sort}(\wh{\bm{S}}_{\ell,m})$

           \FOR{$j=1$ to $k^*$}
               \STATE $\omega_{j,\ell} \gets \dfrac{1}{2\pi\eps} \Arg\left( \dfrac{\wh{\bm{S}}_{\ell,1}^\textrm{sort}[j]}{\wh{\bm{S}}_{\ell,0}^\textrm{sort}[j]}\right)$
           \ENDFOR
       \ENDFOR
       \FOR{$j=1$ to $k^*$}
           \IF{ $\left|\dfrac{\left|\wh{\bm{S}}_{\ell,0}^\textrm{sort}[j]\right|}{\left|\wh{\bm{S}}_{\ell,1}^\textrm{sort}[j]\right|}-1\right| < \dfrac{p_\ell}{N}$}
               \STATE $\bm{R} \gets \bm{R} \cup \left\{ \left( \omega_{j,\ell}, \wh{\bm{S}}_{\ell,0}[\omega_{j,\ell}] \right)\right\}$
           \ENDIF
       \ENDFOR
       \STATE collect terms in $\bm{R}$ with same $\omega$
       \STATE prune small coefficients from $\bm{R}$
       \STATE  $\ell \gets \ell+1$
       \ENDWHILE
   \end{algorithmic}
\end{algorithm}

\subsection{Modifications in the presence of noise}
\label{sec:alg-noise}
In the noiseless versions of the algorithms described in this paper, a test for aliasing is implemented by considering the ratio of magnitudes of shifted and unshifted peaks.  When the samples are corrupted by noise, there will be two challenges. 
The first challenge is that the reconstruction of frequencies from shifts
will be corrupted by noise. The second challenge is that there will be variations among the magnitudes even  for non-aliased terms, so a higher threshold that depends on the size of the noise must be set. When this threshold is too large it affects the ability to distinguish
aliased terms as there will be an increased number of false negatives. On the other hand, 
lower thresholds that reduce false negatives will lead to an increased number of false
positives. 

The first challenge can be addressed effectively through a combination of
using larger $p_j$'s, multiple shifts and a multiscale unwrapping. The
idea of using larger $p_j$'s is rather straightforward yet effective.
For any given $p_j$ the DFT detects the location of the frequencies
modulo $p_j$ rather accurately even with substantial noise. Furthermore, the reconstructed frequencies will still tend to cluster around the true value. Suppose that
we sample the signal and compute DFTs of length $p_j$ on these samples.  The locations of the peaks in these short DFTs tell us the accurate value of $\omega \bmod p_j$ for 
each unaliased frequency $\omega$ appearing in the signal.  Writing $\omega = ap_j + b$ with $a,b\in\Z$, we now know $b$ and must determine $a$.  

With a small amount of noise the reconstructed frequencies  $\wt{\omega}$ using \eqref{eq:omega} will be close to the true $\omega$. We can thus round $\wt{\omega}$ to the nearest integer of the form $ap_j + b$, which will recover the true frequency $\omega$ as long as $|\wt{\omega}-\omega| < p_j/2$. For high noise levels, it is possible that the $\wt{\omega}$ will deviate by more than $p_j/2$ from $\omega$, so that the value for $a$ given by rounding will be incorrect. By choosing larger $p_j$ (i.e., increasing the parameter $c_1$) one can alleviate the problem somewhat, provided that the noise level is not too high. When the noise level is so high that taking a large $p_j$ is no longer economical, a potential solution is to take multiple shifts and employ a multiscale unwrapping technique. We are still at the preliminary stage in our study of these new techniques, but early results are very encouraging.

The second challenge poses a bigger problem, but again it can be addressed in several
ways. The multiscale unwrapping method will repeatedly check for aliasing at each stage, which makes it highly unlikely that an aliased frequency will pass through all the tests. Even in the unlikely even that it does, our algorithm allows false
positives. Since each mode is subtracted from the original signal in our algorithm,
a false positive frequency will lead to an extra mode in the new signal. As the process
continues it will be extracted and cancel out the false frequency extracted earlier.

\section{Average-case analysis}
\label{sec:avgcase}
In this section we prove that the average-case runtime and sampling complexity of our algorithm are $\Theta(k\log k)$ and $\Theta(k)$, respectively. This is shown over a class of random signals described in section \ref{sec:random_signal_model}. Before giving this result on the expected runtime and sampling complexity, in section \ref{sec:inner_loop} estimate the costs of a single iteration of the while loop in algorithm \ref{alg:alg}, lines 5--25. We then describe in section \ref{sec:random_signal_model} the random signal model over which we prove our average-case bounds. In section \ref{sec:markov} we prove that the expected number of iterations of the while loop is constant, and in section \ref{sec:karp} we use this result to prove our average-case bounds.

\subsection{While loop runtime and sampling complexity}
\label{sec:inner_loop}
The computational cost of the while loop in algorithm \ref{alg:alg}, lines 5--25 is dominated by three operations. The first is the evaluation of the current representation $\bm{R}$ of $k-k*$ terms at the $\bigo(k^*)$ points $j/p_\ell$ in line 10. In our implementation, we simply calculated this directly, looping over both the sample points and the terms in the representation.  The complexity of this implementation is $\bigo(p_\ell (k-k^*)) = \bigo(k^*(k-k^*)) = \bigo(k^2)$, and while non-equispaced fast Fourier transforms \cite{dutt1993fast, anderson1996rapid} yield an asymptotically faster runtime of $\bigo(k \log(k))$, they also incur large overhead costs. For the values of $k$ considered in this paper, the direct evaluation seems to have little effect on the overall runtime. The other two dominant computational tasks in the inner loop are the FFTs of $\bigo(k)$ samples and the subsequent sorting of these DFT coefficients. It is well-known that both of these operations can be done in time $\Theta(k\log(k))$ \cite{cormen2001introduction}. Thus the inner loop has overall time complexity $\Theta(k \log(k))$, assuming the use of non-equispaced FFTs.

\subsection{Random signal model}
\label{sec:random_signal_model}
For both the average-case analysis and for the empirical evaluation described in section \ref{sec:empirical} we considered test signals with uniformly random phase over the bandwidth and coefficients chosen uniformly from the complex unit circle. In other words, given $k$ and $N$, we choose $k$ frequencies $\omega_j$ uniformly at random (without replacement) from $[-N/2,N/2)\cap\Z$. The corresponding Fourier coefficients $a_j$ are of the form $\e^{2\pi\i\theta_j}$, where $\theta_j$ is drawn uniformly from $[0,1)$.  The signal is then given by
\begin{equation}
    \label{eq:test_signal}
    S(t) = \sum_{j=1}^k a_j \e^{2\pi\i\omega_j t}.
\end{equation}
This is the standard signal model considered in previous empirical evaluations of sub-linear Fourier algorithms \cite{iwen2007empirical, iwen2010combinatorial, hassanieh2012simple}.

\subsection{Markov Analysis of Collisions}
\label{sec:markov}
In order to analyze the expected runtime and sampling complexity of our algorithms, we must estimate the expected number of collisions among frequencies modulo the sample lengths used by the algorithms. Recall that in the noiseless case, our algorithms are able to detect when a collision between two or more frequencies has occurred, and for those that are not aliased we are able to calculate the value of the frequency. Thus we seek to estimate the expected fraction of frequencies that are aliased modulo a given sample length $p$, since this determines how many passes the algorithm makes. In this section we derive bounds on the expected value of this quantity and discuss how the stopping criteria used in the algorithm affect its average-case performance.

In the random signal model considered in section \ref{sec:empirical}, we assume the $k$ frequencies are uniformly distributed over the bandwidth $[-N/2,N/2)$, and so the residues $\omega \bmod p$ are also uniformly distributed over $[0,p-1]$. Our problem then becomes a classical occupancy problem: the number of collisions among the frequencies is equivalent to the number of multiple-occupancy bins when $k$ balls are thrown uniformly at random into $p$ bins. Define $X_m$ to be the number of single-occupancy bins after $m$ balls are thrown, $Y_m$ to be the number of multiple-occupancy bins after $m$ balls are thrown, and $Z_m$ to be the number of zero-occupancy bins after $m$ balls are thrown.  Since $p$ is constant, we have the trivial relationship $Z_m = p-X_m-Y_m$, so it suffices to consider only the pair $(X_m,Y_m)$. When the $(m+1)^\textup{st}$ ball is thrown, we have the following possibilities:
\begin{itemize}
	\item it lands in an unoccupied bucket, with probability $Z_m/p = 1-(X_m+Y_m)/p$;
	\item it lands in a single-occupancy bucket, with probability $X_m/p$;
	\item it lands in a multiple-occupancy bucket, with probability $Y_m/p$.
\end{itemize}
In the first case, we have $X_{m+1}=X_m+1, \; Y_{m+1} = Y_m$; in the second case, we have $X_{m+1} = X_m-1, \; Y_{m+1}=Y_m+1$; and in the third case, we have $X_{m+1}=X_m, \; Y_{m+1}=Y_m$. Conditioning on the values of $X_m,\,Y_m$ we have
\begin{equation}
\label{eq:EXY_cond}
\E\left( \left[\begin{array}{c}X_{m+1} \\Y_{m+1}\end{array}\right]\left| \left[\begin{array}{c}X_{m} \\Y_{m}\end{array}\right]\right.\right) 
= \left[\begin{array}{cc}1-2/p & -1/p \\1/p & 1\end{array}\right]\left[\begin{array}{c}X_m \\Y_m
\end{array}\right] + \left[\begin{array}{c}1 \\0\end{array}\right],
\end{equation}
so that the system forms a Markov chain. By recursively conditioning on the values of $X_{m-1},\,Y_{m-1}$, we can calculate the expected values of $X_k,\,Y_k$ for any $k>0$ using the initial condition $X_1=1,\,Y_1=0$. Denoting by $A$ the matrix in the right-hand side of equation~\eqref{eq:EXY_cond}, we have
\begin{equation}
\label{eq:EXYk_A}
\E\left( \left[\begin{array}{c}X_k \\Y_k\end{array}\right]\right) = \sum_{m=0}^{k-1}\left(A^m \left[\begin{array}{c}1 \\0\end{array}\right] \right)
= \left( \sum_{m=0}^{k-1}A^m\right) \left[\begin{array}{c}1 \\0\end{array}\right].
\end{equation}
Since $\rho(A) = 1-1/p < 1$, where $\rho$ is the spectral radius, the geometric matrix series can be written
\begin{equation}
\label{eq:geomat}
\sum_{m=0}^{k-1}A^m = (I-A)^{-1}(I-A^k).
\end{equation}
After some linear algebra, we obtain
\begin{equation}
\label{eq:EXYk_final}
\E\left( \left[\begin{array}{c}X_k \\Y_k\end{array}\right]\right) 
= \left[\begin{array}{c}k(1-\frac{1}{p})^{k-1} \\p(1-(1-\frac{1}{p})^k)-k(1-\frac{1}{p})^{k-1}\end{array}\right].
\end{equation}
Since $Z_k=p-X_k-Y_k$, we have $\E\left(Z_k\right) = p(1-1/p)^k$.

In our algorithms we choose $p=ck$ for some small integer $c$. Using this and the approximation $(1+\frac{x}{n})^n \approx \e^x$, we have
\begin{equation}
\label{eq:EXYk_approx}
\E\left( \left[\begin{array}{c}X_k \\Y_k\end{array}\right]\right) \approx \left[\begin{array}{c}k\e^{-1/c} \\ck(1-\e^{-1/c})-k\e^{-1/c}\end{array}\right].
\end{equation}
This gives a nonlinear equation for the expected number of collisions among $k$ frequencies as a function of the parameter $c$. Newton's method can then be used to determine the value $c$ required to ensure a desired fraction of the frequencies are not aliased. For example, to ensure that 90\% of frequencies are isolated on average, it suffices to take $c=5$; this value for the parameter $c$ had already been found to give good performance in our empirical evaluation of the algorithms. 


\subsection{Average-case runtime and sampling complexity}
\label{sec:karp}
In this section we will use a probabilistic recurrence relation due to Karp \cite{karp1994probabilistic,dubhashi2009concentration} to give average-case performance bounds and concentration results for the case when the algorithm is halted after identifying $k$ or more terms. In particular, we use the following theorem for recurrences of the form
\begin{equation}
\label{eq:prob_recur}
T(k) = a(k) + T(H(k)),
\end{equation}
where $T(k)$ denotes the time required to solve an instance of size $k$, $a(k)$ is the amount of work done on a problem of size $k$, and $0\le H(k) \le k$ is a random variable denoting the size of the subproblem generated by the algorithm.

\begin{thm}{\cite[Theorem 1.2]{karp1994probabilistic}}
\label{thm:karp}
Suppose $a(k)$ is nondecreasing, continuous, and strictly increasing on $\{x:a(x)>0\}$, and that $\E[H(k)] \le m(k)$ for a nondecreasing continuous function $m(k)$ such that $m(k)/k$ is also nondecreasing. Denote by $u(k)$ the solution to the deterministic recurrence
\begin{equation}
\label{eq:det_recur}
u(k) = a(k) + u(m(k)). 
\end{equation}
Then for $k>0$ and $t\in\N$,
\begin{equation}
\label{eq:karp_thm}
\P[T(k) > u(k) + ta(k)] \le \left( \frac{m(k)}{k}\right)^t.
\end{equation}
\end{thm}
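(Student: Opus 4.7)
My plan is to prove the bound by induction on the integer parameter $t$, exploiting the recursive structure of both $T$ and $u$. The base case $t=0$ is trivial, asserting merely $\P[T(k) > u(k)] \leq 1 = (m(k)/k)^0$. For the inductive step, I would first use the recurrences $T(k) = a(k) + T(H(k))$ and $u(k) = a(k) + u(m(k))$ to rewrite the tail event as
$$\{T(k) > u(k) + ta(k)\} = \{T(H) > u(m(k)) + ta(k)\} = \{T(H) > u(k) + (t-1)a(k)\},$$
so that the problem reduces to bounding a tail for the random-size subproblem $T(H)$.

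Conditioning on $H$ and applying the inductive hypothesis pointwise is the central operation. The monotonicity of $a$ (which propagates to $u$ through its recursive definition), combined with the nondecreasing-ratio assumption on $m(k)/k$, guarantees that for each value $h \leq k$ of $H$ we have $u(k) + (t-1)a(k) \geq u(h) + (t-1)a(h)$, so the conditional probability $\P[T(h) > u(m(k)) + ta(k)]$ is bounded by $(m(h)/h)^{t-1} \leq (m(k)/k)^{t-1}$ via the inductive hypothesis at exponent $t-1$. This preliminary estimate falls short of the target by exactly one factor of $m(k)/k$, and the remaining work is to extract that factor.

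The key observation that enables this is a case split on the size of $H$. On the ``small'' event $\{H \leq m(k)\}$, the monotonicity of $u$ and $a$ further gives $u(m(k)) + ta(k) \geq u(H) + ta(H)$, so a stronger form of the inductive hypothesis (at exponent $t$) becomes available and delivers $(m(H)/H)^t \leq (m(k)/k)^t$, the last inequality coming from the nondecreasing-ratio assumption and $H \leq k$. On the complementary ``large'' event $\{H > m(k)\}$, Markov's inequality applied to $H$ — valid because $\E[H] \leq m(k) \leq k$ — controls the probability of that event in terms of $m(k)/k$, supplying the missing geometric factor. Summing the two contributions should close the induction.

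The main obstacle, and the reason Karp's original argument is nontrivial, is the delicate quantitative bookkeeping needed to make this small-versus-large split align exactly with the target bound $(m(k)/k)^t$ rather than with $C(m(k)/k)^t$ for some constant $C>1$ or with a suboptimal exponent. The nondecreasing-ratio hypothesis on $m(k)/k$ — in contrast to the nonincreasing ratio familiar from standard divide-and-conquer recurrences — plays a specific structural role in propagating estimates from sub-instances up to the problem of size $k$, and some care is required to apply Markov to the appropriate function of $H$ so that the ``bad'' contribution is measured against $k$ and not against $m(k)$, thereby producing the factor $m(k)/k$ rather than the trivial bound $\E[H]/m(k) \leq 1$.
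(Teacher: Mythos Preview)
The paper does not prove this theorem; it is quoted as \cite[Theorem~1.2]{karp1994probabilistic} and used purely as a black box to derive the concentration bounds~\eqref{eq:conc_bound} and the analogous sampling bound. There is therefore no proof in the paper against which to compare your proposal.

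For what it is worth, your sketch does track the architecture of Karp's original argument: induction on $t$, unwinding both recurrences by one step, conditioning on the random subproblem size $H$, and a small-$H$/large-$H$ split with the inductive hypothesis on one side and Markov's inequality on the other. You are also right to flag the bookkeeping on the large-$H$ event as the delicate point: the naive application $\P[H>m(k)]\le \E[H]/m(k)\le 1$ is vacuous, and extracting the needed factor $m(k)/k$ requires combining the monotone-ratio hypothesis with the inductive bound at exponent $t-1$ in a specific way that your outline gestures toward but does not pin down. If you intend to include a full proof, that is the step to make precise; otherwise, citing Karp as the paper does is the appropriate course here.
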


Our algorithm does work $a(k) = \Theta(k\log(k))$ on input of size $k$ and generates a subproblem whose average size is $m(k) = k/10$. (Recall from section \ref{sec:markov} that with the parameter $c=5$, on average over 90\% of the frequencies were not aliased modulo $p = \bigo(ck)$.) The associated deterministic recurrence is then
\begin{equation}
\label{eq:our_det_recur}
u(k) = \Theta(k\log(k)) + u(k/10),
\end{equation}
whose solution is $u(k) = \Theta(k\log(k))$ (see, e.g., \cite{cormen2001introduction}). A straightforward application of Theorem \ref{thm:karp} yields 
\begin{equation}
\label{eq:conc_bound}
\P[T(k) > \Theta(k\log(k)) + t k\log(k)] \le 10^{-t},
\end{equation}
so that the runtime is tightly concentrated about its mean $\Theta(k\log(k))$. 

The sampling complexity $S(k)$ can be handled in an analogous manner, since in this case $a(k) = \Theta(k)$ and $m(k) = k/10$ as before. The associated deterministic recurrence becomes 
\begin{equation}
\label{eq:our_det_samp_recur}
u(k) = \Theta(k) + u(k/10),
\end{equation}
whose solution is $u(k) = \Theta(k)$. Applying Theorem \ref{thm:karp} again we have
\begin{equation}
\P[S(k) > \Theta(k) + tk] \le 10^{-t},
\end{equation}
so that we again have tight concentration of the number of samples around the mean $\Theta(k)$.

%

\section{Empirical Evaluation}
\label{sec:empirical}
In this section we describe the results of an empirical evaluation of the \emph{adaptive} deterministic and Las Vegas variants of the Phaseshift algorithm described above.  Both algorithms were implemented in C++ using FFTW 3.0 \cite{frigo2005design} for the FFTs, using \texttt{FFTW\_ESTIMATE} plans since the sample lengths are not known in advance for the Las Vegas variant.  For comparison we also ran the same tests on the four variants of GFFT as well as on AAFFT and FFTW itself.  The FFTW runs utilized the \texttt{FFTW\_PATIENT} plans with wisdom enabled, and so are highly optimized. The experiments were run on a single core of an Intel Xeon E5620 CPU with a clock speed of 2.4 GHz and 24 GB of RAM, running SUSE Linux with kernel 2.6.16.60-0.81.2-smp for x86\_64. All code was compiled with the Intel compiler using the \texttt{-fast} optimization. As in~\cite{iwen2011improved}, timing is reported in CPU ticks using the \texttt{cycle.h} file included with the source code for FFTW.

In the following sections we refer to our algorithm as ``Phaseshift'', since by taking shifted time samples of the input signal we also shift the phase of the Fourier coefficients.  To keep the plots readable, we only show data for the adaptive, deterministic variant of our algorithm; the other variants perform similarly  The algorithms of~\cite{iwen2011improved} are denoted GFFT-XY, where X $\in \{$D,R$\}$ and Y $\in\{$F,S$\}$.  The D/R stands for deterministic or randomized, while the F/S stands for fast or slow.  The fast variants use more samples but less runtime while the slow variants use fewer samples but more runtime.  In the plots below, we always show the GFFT variant with the most favorable sampling or runtime complexity. Finally, AAFFT denotes the algorithm of~\cite{gilbert2005improved}. The implementations tested are summarized in table~\ref{tab:implementations} along with the average-case sampling and runtime complexities, and the associated references.
\begin{table}
	\centering
	\caption{Implementations used in the empirical evaluation.}
		\begin{tabular}{c|c|c|c|c}
		\hline
		Algorithm & R/D & Samples  & Runtime  & Reference \\
		\hline
		PS-Det & D & $k$ & $k\log k$ & Section \ref{sec:avgcase} \\
		PS-LV  & R & $k$ & $k\log k$ & Section \ref{sec:avgcase} \\
		GFFT-DF& D & $k^2\log^4 N$ & $k^2\log^4 N$ & \cite{iwen2011improved} \\
		GFFT-DS& D & $k^2 \log^2 N$ & $N k\log^2 N$ & \cite{iwen2011improved} \\
		GFFT-RF& R & $k\log^4 N$ & $k\log^5 N$ & \cite{iwen2011improved} \\
		GFFT-RS& R & $k\log^2 N$ & $N \log N$ & \cite{iwen2011improved} \\
		AAFFT  & R & $k \log^c N$ & $k \log^c N$ & \cite{gilbert2005improved} \\
		FFTW & D & $N$ & $N\log N$ & \cite{frigo2005design} \\
		\end{tabular}
	\label{tab:implementations}
\end{table}

\subsection{Setup}
Each data point in Fig.~\ref{fig:fixed_n}--\ref{fig:fixed_k} is the average of 100 independent trials of the associated algorithm for the given values of the bandwidth $N$ and the sparsity $k$. The lower and upper bars associated with each data point represent the minimum and maximum number of samples or runtime of the algorithm over the 100 test functions. The values of $k$ tested were $2, 4, 8, \ldots, 4096$, while the values of $N$ were $2^{17}, 2^{18}, \ldots, 2^{26}$.  For larger values of $k$, the slow GFFT variants and AAFFT took too long to complete on our hardware, so we only present partial data for these algorithms. Nevertheless, the trend seen in the plots below continues for higher values of the sparsity. The test signals were generated according to the signal model described in section \ref{sec:random_signal_model}.

The Phaseshift and deterministic GFFT variants will always recover such signals exactly.  The randomized GFFT variants are Monte Carlo algorithms, and so, when they succeed, will also recover the signal exactly.  AAFFT, on the other hand, is an approximation algorithm which will fail on a non-negligible set of input signals.  However, for the runs depicted in Fig.~\ref{fig:fixed_n}--\ref{fig:fixed_k}, AAFFT always produced an answer with $\ell_2$ error less than $10^{-4}$.  The randomized GFFT variants failed a total of 7 times out of 2200 test signals, a relatively small amount that can be reduced by parameter tuning. For the Phaseshift variants, we chose the parameters $c_1 = 5, \; c_2 = 10$, and took the shift $\eps$ to be $1/2N$.  Finally, for the randomized GFFT variants, we chose the Monte Carlo parameter to be $1.2$.

\subsection{Sampling Complexity}
\label{sec:sampling}
In Fig.~\ref{fig:fixed_n} (a), we compare the average number of samples of the input signal $S$ required by each algorithm when the bandwidth $N$ fixed at $2^{22}$.  The sparsity of the test signal is varied from 2 to 4096 by powers of two.  We can see that the Phaseshift variants require over an order of magnitude fewer samples than GFFT-RS, the GFFT variant with the lowest sampling requirements.  Both Phaseshift variants also require over an order of magnitude fewer samples than AAFFT.  The comparison with the deterministic GFFT variants is even starker; Phaseshift-Det requires two orders of magnitude fewer samples than GFFT-DS (not shown), and four orders of magnitude fewer samples than GFFT-DF (not shown).

In Fig.~\ref{fig:fixed_k} (a), we compare the average number of samples of the input signal $S$ required by each algorithm when the sparsity $k$ is fixed at 60.  The bandwidth $N$ was varied from $2^{17}$--$2^{26}$ by powers of two.  Using powers of two for the bandwidth allows the best performance for both FFTW and AAFFT, though this fact is more relevant for the runtime comparisons in the following section.  We can see that the Phaseshift variants require many fewer samples than all four GFFT variants as well as AAFFT and FFTW, for all values of $N$ tested.  The Phaseshift variants exhibit almost no dependence on the bandwidth for all values of $N$, a feature not shared by the other deterministic algorithms. We note here that in future work we plan to replace the $1/2N$ shift by two or more larger shifts with co-prime denominators to obtain an equivalent shift, as in~\cite{wang1998use}.  This should lead to more robustness at high values of $N$.

\subsection{Runtime Complexity}
\label{sec:runtime}
In Fig.~\ref{fig:fixed_n} (b), we compare the average runtime of each algorithm over 100 test signals when the bandwidth $N$ is fixed at $2^{22}$.  The range of sparsity $k$ considered is the same as in section~\ref{sec:sampling}. For all values of $k$ the Phaseshift variants are faster than GFFT-RF (the fastest GFFT variant) and AAFFT by more than an order of magnitude. When compared to GFFT-RS (not shown), GFFT-DS (not shown), and FFTW, the difference in runtime is closer to three orders of magnitude.

In Fig.~\ref{fig:fixed_k} (b), we compare the average runtime of each algorithm over 100 test signals when the sparsity $k$ is fixed at 60.  The range of bandwidth considered is the same as in section~\ref{sec:sampling}. The Phaseshift variants are the only algorithms that outperform FFTW for all values of $N$ tested. The other implementations tested only become competitive with the standard FFT for $N \gtrsim 2^{20}$, while ours are faster even for modest $N$.

\begin{figure}
	\centering
		\subfloat[]{\includegraphics[width=0.8\textwidth]{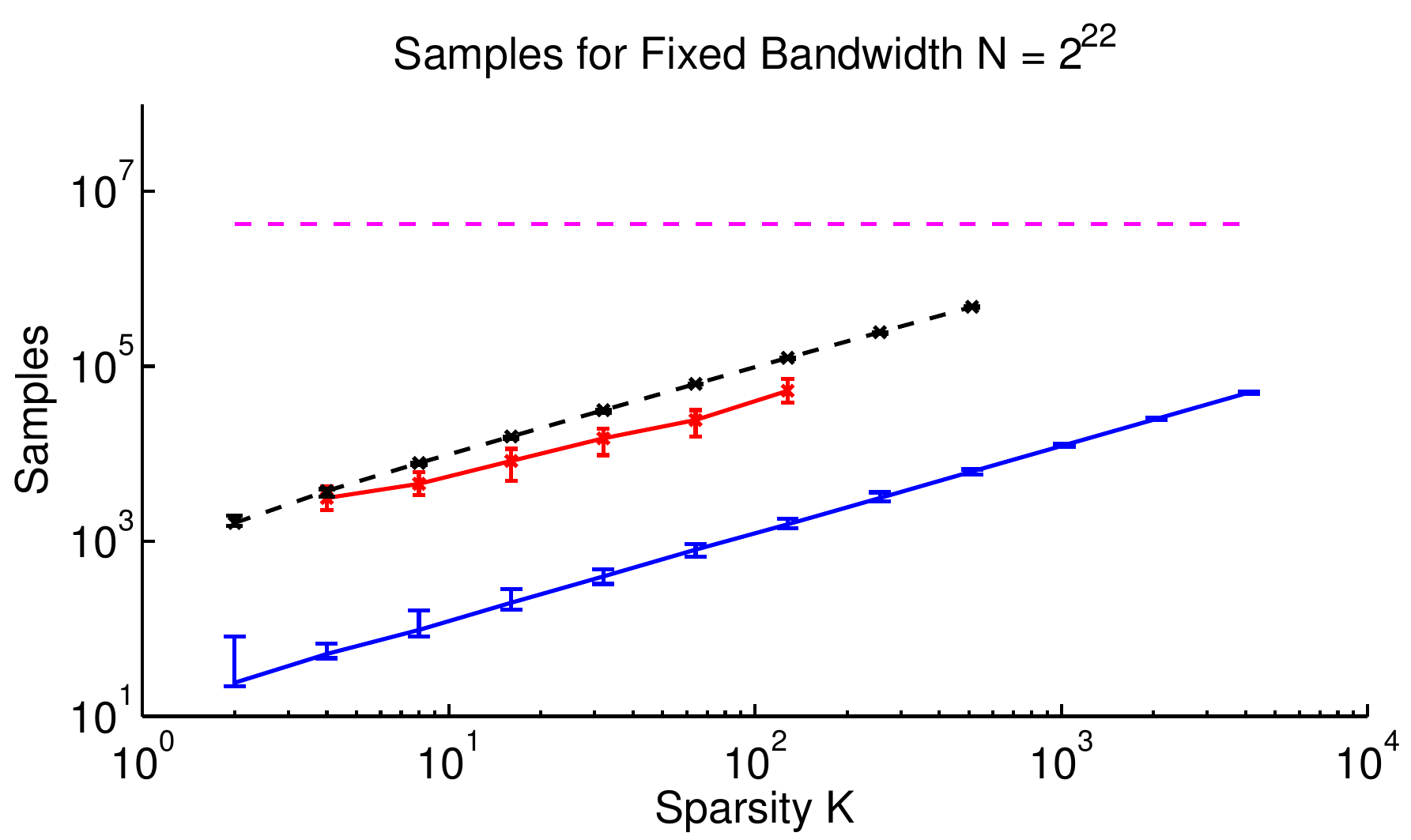}}\\
		\subfloat[]{\includegraphics[width=0.8\textwidth]{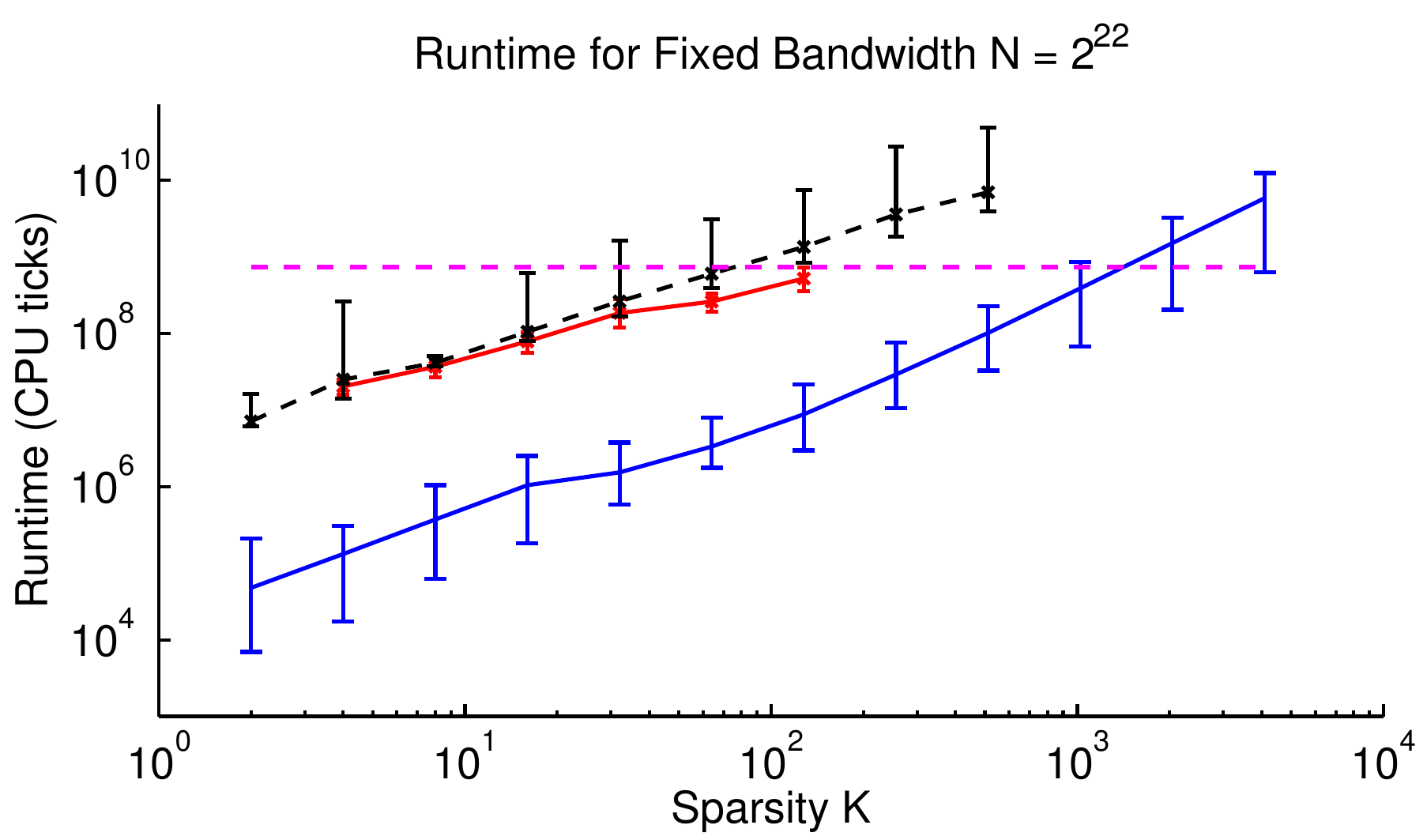}}
		\caption{(a) Sampling complexity with fixed bandwidth $N=2^{22}$ for PS-Det (blue solid line), GFFT-RS (red solid line), AAFFT (black dashed line), and FFTW (magenta dashed line). (b) Runtime complexity with fixed bandwidth $N=2^{22}$ for PS-Det (blue solid line), GFFT-RF (red solid line), AAFFT (black dashed line), and FFTW (magenta dashed line).}
		\label{fig:fixed_n}		
\end{figure}

\begin{figure}
	\centering
		\subfloat[]{\includegraphics[width=0.8\textwidth]{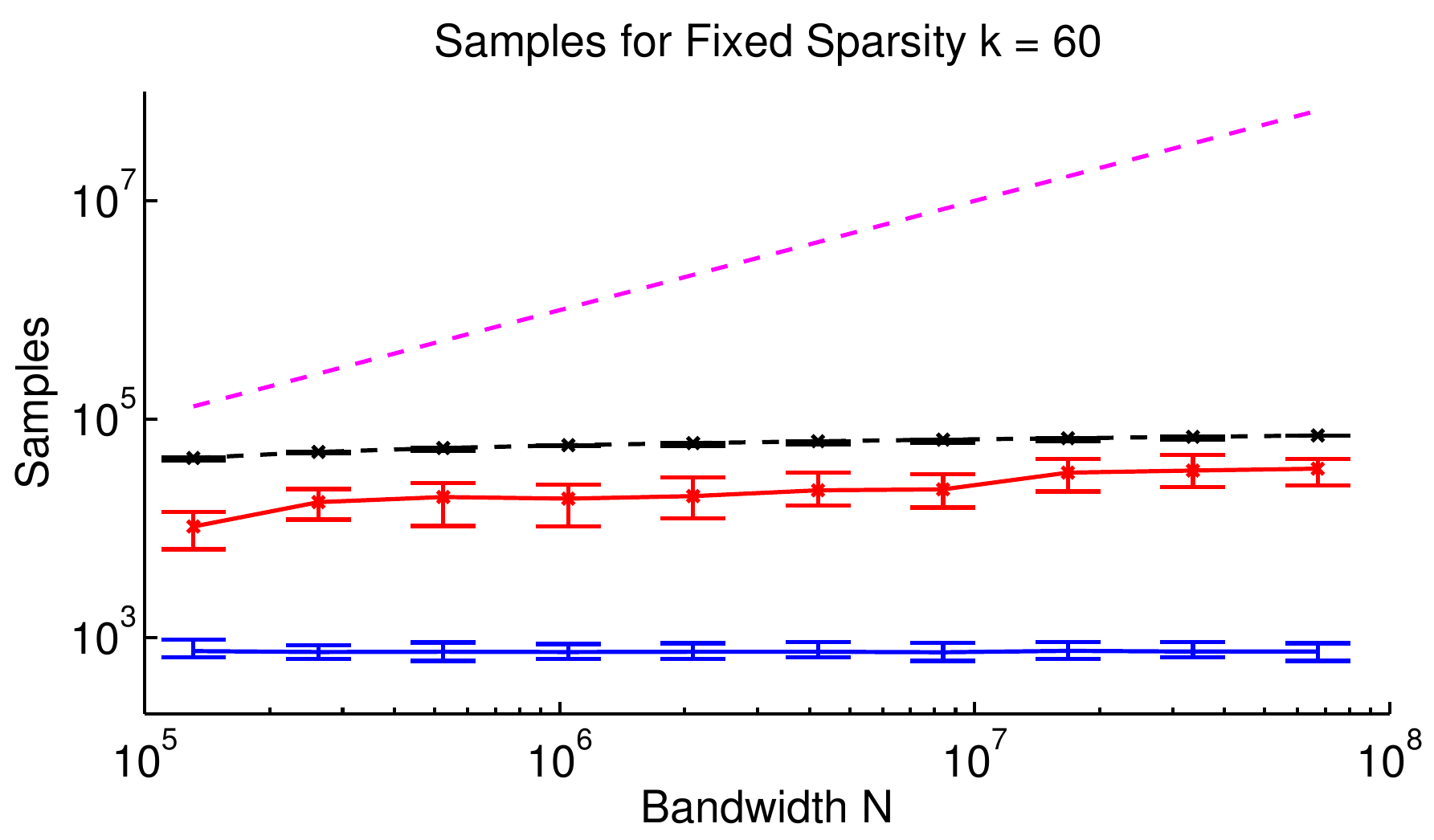}}\\
		\subfloat[]{\includegraphics[width=0.8\textwidth]{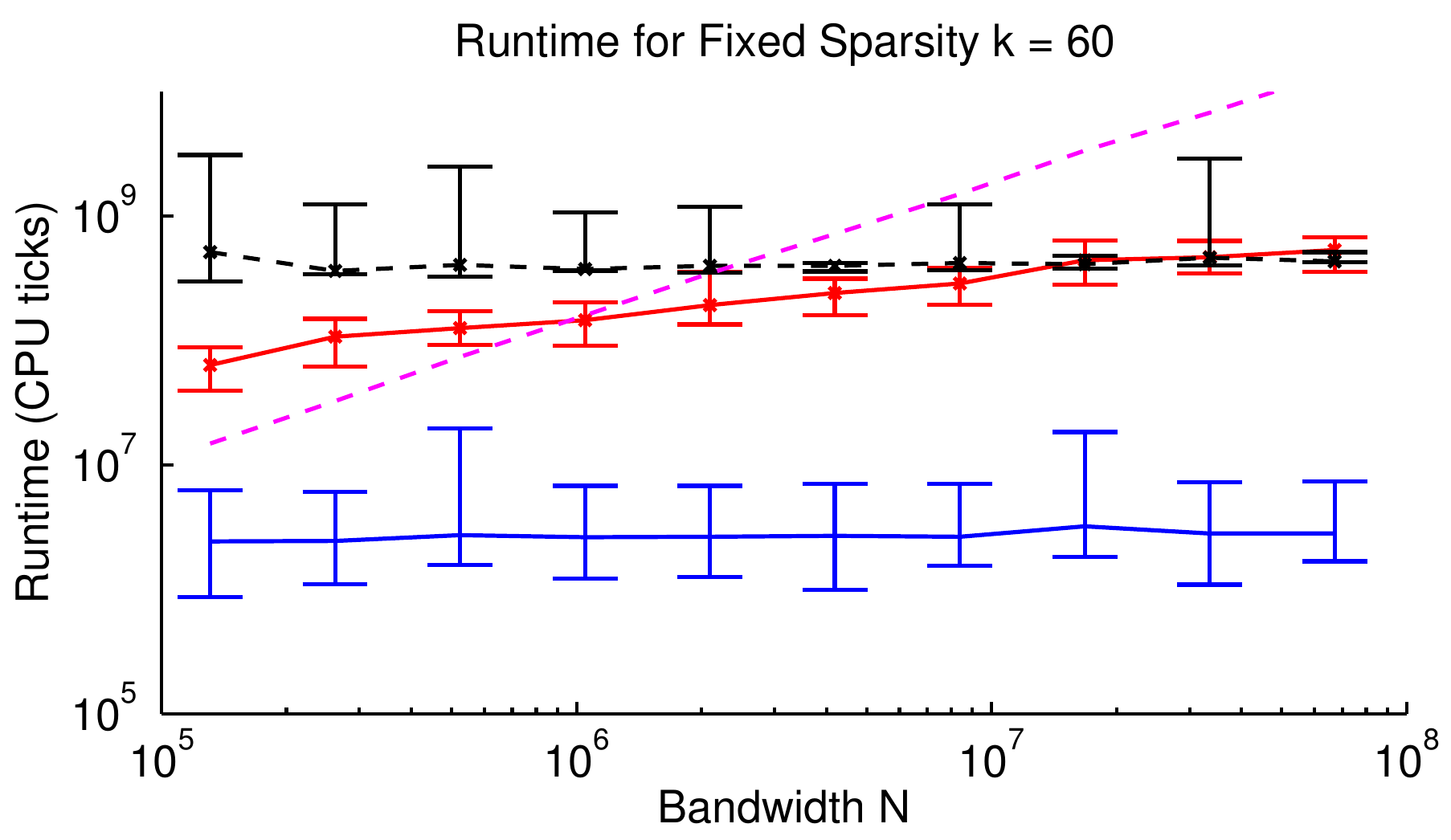}}
		\caption{(a) Sampling complexity with fixed sparsity $k=60$ for PS-Det (blue solid line), GFFT-RS (red solid line), AAFFT (black dashed line), and FFTW (magenta dashed line). (b) Runtime complexity with fixed sparsity $k=60$ for PS-Det (blue solid line), GFFT-RF (red solid line), AAFFT (black dashed line), and FFTW (magenta dashed line).}
		\label{fig:fixed_k}
\end{figure}

\subsection{Noisy Case}
We report here on a preliminary study of the performance of the deterministic algorithm in the presence of noise. Our noisy signals were of the same form as in the previous section, but with  complex white gaussian noise of standard deviation $\sigma$ added to each measurement. As described in section \ref{sec:alg-noise}, the simplest way to deal with low-level noise is to simply round the reconstructed frequencies to the nearest integer of the form $ap_j + b$, where $b \equiv \omega \bmod p_j$ is the location of the peak in a length-$p_j$ DFT. This modification doesn't change the runtime or sampling complexity significantly, so in this section we focus on the error in the approximation as a function of the noise level $\sigma$ and the parameter $c_1$.

In the existing literature on the sparse Fourier transform, the $\ell_2$ norm is most often used to assess the quality of approximation. There are many reasons for this choice, with the the two most convincing perhaps being the completeness of the complex exponentials with respect to the $\ell_2$ norm and Parseval's theorem. For certain applications, however, this choice of norm is inappropriate. For example, in wide-band spectral estimation and radar applications, one is interested in identifying a set of frequency intervals containing active Fourier modes. In this case, an estimate $\wt{\omega}$ of the true frequency $\omega$ with $|\wt{\omega}-\omega| \ll N$ is useful, but unless $\wt{\omega}=\omega$ the $\ell_2$ metric will report an $\bigo(1)$ error. Furthermore, when considering non-periodic signals (equivalently, non-integer $\omega$'s) the same precision problem appears when using the $\ell_2$ metric.

For these reasons, we propose measuring the approximation error of sparse Fourier transform problems with the Earth Mover Distance (EMD) \cite{rubner2000earth}. Originally developed in the context of content-based image retrieval, EMD measures the minimum cost that must be paid (with a user-specified cost function) to transform one distribution of points into another. EMD can be calculated efficiently as the solution of a linear program corresponding to a certain flow minimization problem. In our situation, we consider the cost to move a set of estimated Fourier modes and coefficients $\left\{(\wt{\omega}_j,c_{\wt{\omega}_j})\right\}_{j=1}^{\wt{k}}$ to the true values $\left\{(\omega_i, c_{\omega_j})\right\}_{j=1}^k$ under the cost function
\begin{equation}
\label{eq:emd-cost}
d_1\big( (\omega,c_\omega), (\wt{\omega},c_{\wt{\omega}}); N \big) \eqdef \frac{|\omega-\wt{\omega}|}{N} + |c_\omega-c_{\wt{\omega}}|.
\end{equation}
This choice of cost function strikes a balance between the fidelity of the frequency estimate (as a fraction of the bandwidth) and that of the coefficient estimate. We denote the EMD using $d_1$ for the cost function by EMD(1) below.

In figure \ref{fig:error} we report the average EMD(1) error over 100 test signals as a function of the input noise level $\sigma$, for various choices of the parameter $c_1$. In this experiment, the sparsity and bandwidth are fixed at $k=64$ and $N=2^{22}$, respectively. As expected, the error decreases as $c_1$ increases, since the rounding procedure described in section \ref{sec:alg-noise} is more likely to result in the true frequency. Moreover, the error increases linearly with the noise level, indicating the procedure's robustness in the presence of noise. 

\begin{figure}
	\centering
	\includegraphics[width=0.8\textwidth]{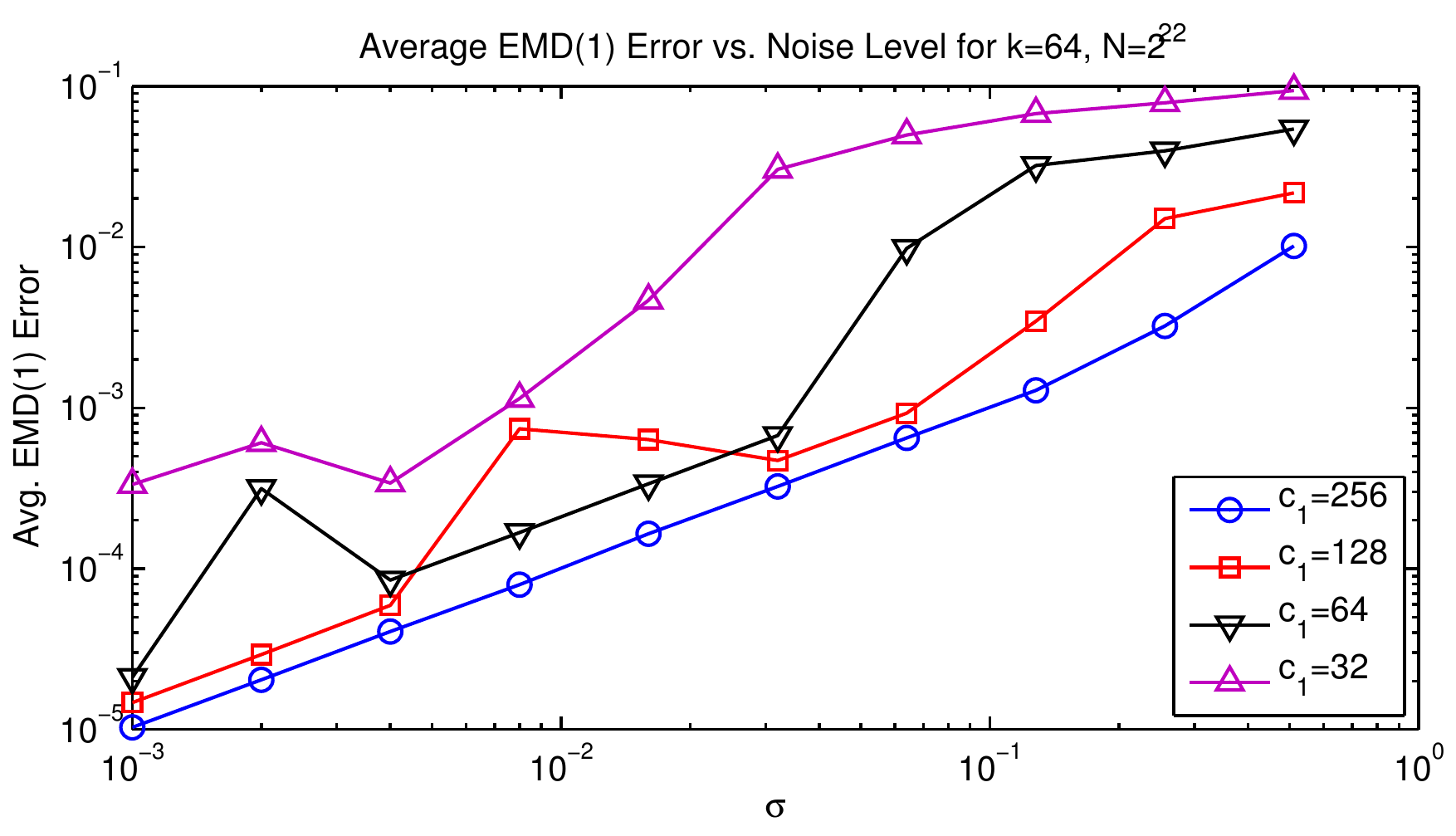}
	\caption{EMD(1) error as a function of the noise level $\sigma$ for various choices of the parameter $c_1$. The sparsity and bandwidth are fixed at $k=64$, $N=2^{22}$, respectively.}
	\label{fig:error}
\end{figure}

We remark that in the noiseless case the choice $c_1=5$ was found to be sufficient, while figure \ref{fig:error} indicates that the much larger value $c_1\approx 256$ is necessary for good approximation in the EMD(1) metric. The larger sample lengths imply an increase in both the runtime and sampling complexity, and indicate that the rounding procedure of section \ref{sec:alg-noise} should be complemented by other modifications. This is the purpose of a second manuscript under preparation, in which we combine the rounding procedure with the use of larger shifts $\eps_j$ in a multiscale approach to frequency estimation.

\section{Conclusion}
\label{sec:conclusion}

In this paper we have presented a deterministic and Las Vegas algorithm for the sparse Fourier transform problem that empirically outperform existing algorithms in average-case sampling and runtime complexity. While our worst-case bounds do not improve the asymptotic complexity, we are able to extend by an order of magnitude the range of sparsity for which our algorithm is faster than FFTW in the average case. The improved performance of our algorithm can be attributed to two major factors: adaptivity and ability to detect aliasing. In particular, we are able to extract more information from a small number of function samples by considering the \emph{phase} of the DFT coefficients in addition to their magnitudes. This represents a significant improvement over the current state of the art for the sparse Fourier transform problem.

We have developed a multiresolution approach to handle the noisy case, in which we learn the value of a frequency from most to least significant bit by increasing the size of the shift $\eps$. Finally, we are exploring the extension of these methods to handle non-integer frequencies, which would represent the first such result in the sparse Fourier transform context.

\section*{Acknowledgments}
We would like to thank Mark Iwen and I. Ben Segal for making available the source code to the AAFFT and GFFT algorithms, Yossi Rubner for making available the source code for the Earth Mover Distance, and Piotr Indyk and Eric Price for sharing a preprint and source code for the sFFT algorithm.  We also acknowledge helpful discussions with Anna Gilbert and Martin Strauss.

\bibliographystyle{amsalpha}
\bibliography{comprehensive}

\end{document}